\documentclass[11pt]{amsart}
\pdfoutput=1

\usepackage{amsmath,amsthm,amssymb,mathrsfs,mathtools,stmaryrd,color}

\usepackage[top=1in, bottom=1in, left=1in, right=1in]{geometry}

\usepackage{float}
\usepackage{graphicx}
\usepackage{pdfpages}

\usepackage[utf8]{inputenc}
\usepackage[T1]{fontenc}

\usepackage{enumitem}
\setlist[enumerate]{itemsep=2pt,parsep=2pt,before={\parskip=2pt}}

\usepackage[colorlinks=true,hyperindex, linkcolor=red!60, pagebackref=false, citecolor=cyan, pdfpagelabels]{hyperref}
\usepackage[capitalize]{cleveref}

\usepackage{url}
\usepackage{breakurl}

\usepackage{tikz}
\usetikzlibrary{calc}
\usetikzlibrary{math}

\usepackage{thmtools}
\usepackage{thm-restate}

\newtheoremstyle{underline}
{}        
{}              
{}              
{\parindent}    
{}              
{}             
{1.5mm}         
{{\underline{\thmname{#1}\thmnumber{ #2}}~\thmnote{(#3)}}}

\newtheoremstyle{underlineBold}
{}        
{}              
{}              
{}    
{}              
{}             
{1.5mm}         
{{\underline{\textbf{\thmname{#1}\thmnumber{ #2}}}~\thmnote{(#3)}}}

\newtheoremstyle{underlineNoIndent}
{}        
{}              
{}              
{}    
{}              
{}             
{1.5mm}         
{{\underline{\thmname{#1}\thmnumber{ #2}.}}}

\newtheorem{theorem}{Theorem}[section]
\newtheorem*{theorem*}{Theorem}
\newtheorem*{definition*}{Definition}
\newtheorem{proposition}[theorem]{Proposition}
\newtheorem{lemma}[theorem]{Lemma}

\theoremstyle{definition}

\theoremstyle{underlineBold}
\newtheorem{case}{Case}
\theoremstyle{underline}
\newtheorem{subcase}{Case}[case]

\theoremstyle{underlineNoIndent}
\newtheorem{claim}{Claim}[subcase]


\begin{document}

\includepdf[pages=-]{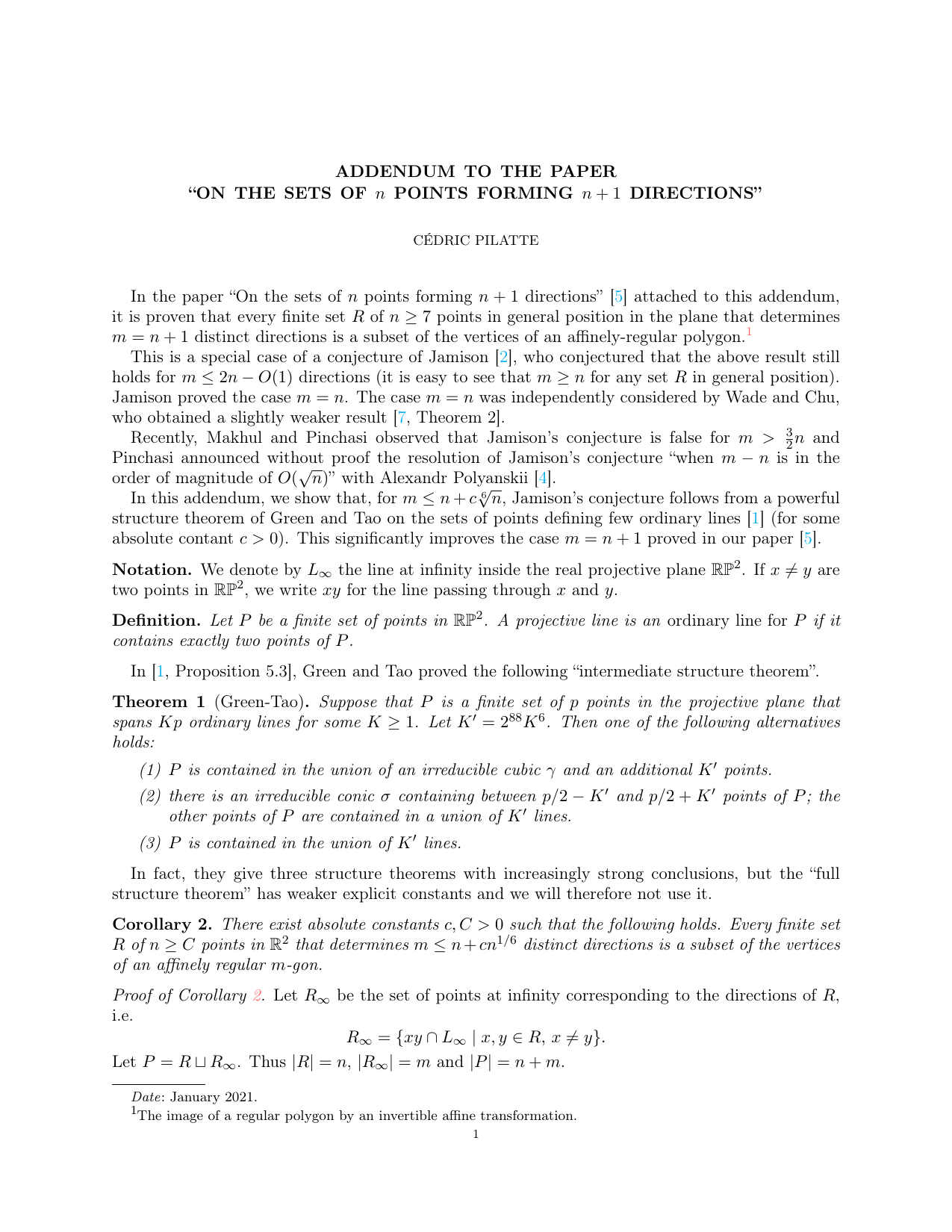}

\title{On the sets of $n$ points forming $n+1$ directions}

\author{C\'edric Pilatte}
\address{Department of Mathematics, University of Mons (UMONS), Place du Parc 20, 7000 Mons, Belgium.}
\date{October 2018}

\begin{abstract}
Let $S$ be a set of $n\geq 7$ points in the plane, no three of which are collinear. Suppose that $S$ determines $n+1$ directions. That is to say, the segments whose endpoints are in $S$ form $n+1$ distinct slopes. We prove that $S$ is, up to an affine transformation, equal to $n$ of the vertices of a regular $(n+1)$-gon. This result was conjectured in 1986 by R. E. Jamison.
\end{abstract}

\maketitle

\noindent \textbf{Keywords.} Combinatorics, Discrete Geometry.

\section{Introduction}

In 1970, inspired by a problem of Erd\H{o}s, Scott~\cite{Scott} asked the following question, now known as \emph{the slope problem}: what is the minimum number of directions determined by a set of $n$ points in $\mathbb{R}^2$, not all on the same line? By the number of directions (or slopes) of a set $S$, we mean the size of the quotient set $\{PQ\mid P,Q\in S, P\neq Q\}/\!\!\sim$, where $\sim$ is the equivalence relation given by parallelism: $P_1Q_1 \sim P_2Q_2\iff P_1Q_1\parallel P_2Q_2$.

Scott conjectured that $n$ points, not all collinear, determine at least $2\lfloor \frac n2\rfloor$ slopes. This bound can be achieved, for even $n$, by a regular $n$-gon; and for odd $n$, by a regular $(n-1)$-gon with its center. After some initial results of Burton and Purdy~\cite{BurtonPurdy}, this conjecture was proven by Ungar~\cite{Ungar} in 1982, using techniques of Goodman and Pollack~\cite{GoodmanPollack}. His beautiful proof is also exposed in the famous \emph{Proofs from the Book}~\cite[Chapter~11]{AignerZiegler}. Recently, Pach, Pinchasi and Sharir solved the tree-dimensional analogue of this problem, see\cite{Pinchasi2004, Pinchasi2007}.

A lot of work has been done to determine the configurations where equality in Ungar's theorem is achieved. A \emph{critical set} (respectively \emph{near-critical set}) is a set of $n$ non-collinear points forming $n-1$ slopes (respectively $n$ slopes). Jamison and Hill described four infinite families and 102 sporadic critical configurations~\cite{JamisonCritical1, JamisonCritical2, JamisonCatalogue}. It is conjectured that this classification is accurate for $n\geq 49$. No classification is known in the near-critical case. See~\cite{JamisonSurvey} for a survey of these questions, and other related ones.

In this paper, we suppose that no three points of $S$ are collinear (we say that $S$ is in general position). This situation was first investigated by Jamison~\cite{JamisonNoncollinear}, who proved that $S$ must determine at least $n$ slopes. As above, equality is possible with a regular $n$-gon. It is a well-known fact that affine transformations preserve parallelism. Therefore, the image of a regular $n$-gon under an affine transformation also determines exactly $n$ slopes.\footnote{A polygon obtained as the image of a regular polygon by an affine transformation is sometimes called an \emph{affine-regular} or \emph{affinely regular} polygon.} Jamison proved the converse, i.e. that the affinely regular polygons are the only configurations forming exactly $n$ slopes.

A much more general statement is believed to be true: for some constant $c_1$, if a set of $n$ points in general position forms $m\leq 2n-c_1$ slopes, then it is affinely equivalent to $n$ of the vertices of a regular $m$-gon (see~\cite{JamisonNoncollinear}). This would imply, in particular, that for every $c\geq 0$ and $n$ sufficiently large, every simple configuration of $n$ points determining $n+c$ slopes arises from an affinely regular $(n+c)$-gon, after deletion of $c$ points. Jamison's result thus shows it for $c=0$. Here, we will prove the case $c=1$. The general conjecture is still open. In fact, for $c\geq 2$, it is not even known whether the points of $S$ form a convex polygon. 

Every affinely regular polygon is inscribed in an ellipse. Conics will play an important role in our proof. Another problem of Elekes~\cite{Elekes} is the following: for all $m\geq 6$ and $C>0$, there exists some $n_0(m,C)$ such that every set $S\subset \mathbb{R}^2$ with $|S|\geq n_0(m,C)$  forming at most $C|S|$ slopes contains $m$ points on a (possibly degenerate) conic. It is still unsolved, even for $m=6$.

\section{Results}

\subsection{Preliminary Remarks}
\label{preliminaries}

Let $S$ be a set of $n$ points in the plane, in general position, that determines exactly $n+1$ slopes. If $S$ had a point lying strictly inside its convex hull, there would be at least $n+2$ slopes, as was proved by Jamison~\cite[Theorem~7]{JamisonNoncollinear}. Therefore, we know that we can label the points of $S$ as $A_1, \ldots ,A_n$, such that $A_1A_2\ldots A_n$ is a convex polygon. 

For every point $A_i\in S$, there are $n-1$ segments, with distinct slopes, joining $A_i$ to the other points of $S$. We will say that a slope is \emph{forbidden} at $A_i$ if it is not the slope of any segment $A_iA_j$, for $j\neq i$. Since $S$ determines $n+1$ slopes, \emph{there are exactly two forbidden slopes} at each point of $S$.

We will denote by $\nabla A_iA_j$ the slope of the line $A_iA_j$. Thus, an equality like $\nabla A_{i_1}A_{i_2}=\nabla A_{i_3}A_{i_4}$ is equivalent to $A_{i_1}A_{i_2}\parallel A_{i_3}A_{i_4}$.
Throughout our main proof, we will repeatedly make use of the next lemma. It will be particularly useful to prove that a slope is forbidden at a point or that two slopes are equal. As an obvious corollary, we have that $\nabla A_{i-1}A_{i+1}$ is forbidden at $A_i$ for all $i\in \mathbb Z$. Throughout the paper, when we say ``for all $i\in\mathbb Z$'', we consider the indices modulo $n$, so that $A_{n+1}:=A_1$, and so on.

\begin{lemma}
Let $1 \leq i < j < k \leq n$. Exactly one of the following is true:
\begin{itemize}
  \item the slope of $A_iA_k$ is forbidden at $A_j$;
  \item $\exists p,i<p<k$ such that $A_iA_k \parallel A_jA_p$.
\end{itemize}
Moreover, in the second case, $\nabla A_jA_p\notin \{ \nabla A_iA_l \mid l\neq j, k\}\cup \{\nabla A_lA_j\mid l\neq i,k\}$.
\label{lem:forbidden}
\end{lemma}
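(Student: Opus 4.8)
The plan is to use nothing beyond two structural facts: that $A_1A_2\cdots A_n$ is a convex polygon and that no three of its points are collinear; the hypothesis on the number of slopes plays no role in this lemma. The geometric input is the way a chord separates a convex polygon. Fix $1\le i<j<k\le n$ and let $\ell_0$ denote the line $A_iA_k$. Convexity splits the remaining vertices into two arcs: $A_{i+1},\dots,A_{k-1}$ lie in one of the two open half-planes bounded by $\ell_0$, and $A_{k+1},\dots,A_{i-1}$ (indices mod $n$) lie in the other; the half-planes may be taken \emph{open} precisely because general position forbids any further vertex on $\ell_0$. In particular $A_j$, being one of $A_{i+1},\dots,A_{k-1}$, lies strictly on one side of $\ell_0$; call that open side $H$.

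For the dichotomy, note first that by the definition of ``forbidden'' the slope $\nabla A_iA_k$ fails to be forbidden at $A_j$ if and only if some segment $A_jA_q$ with $q\ne j$ is parallel to $A_iA_k$, i.e.\ if and only if some vertex $A_q$ with $q\ne j$ lies on the line $\ell$ through $A_j$ parallel to $\ell_0$. Since $\ell$ is parallel to $\ell_0$ and passes through $A_j\in H$, the whole of $\ell$ lies in $H$; hence any vertex of $S$ on $\ell$ is one of $A_{i+1},\dots,A_{k-1}$, i.e.\ $i<q<k$, and the second alternative of the lemma holds with $p:=q$. Conversely, if the second alternative holds then $\nabla A_iA_k=\nabla A_jA_p$ is the slope of an actual segment $A_jA_p$ of $S$, hence not forbidden at $A_j$. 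Since ``forbidden at $A_j$'' and ``attained by a segment incident to $A_j$'' are mutually exclusive by definition, exactly one alternative holds. (Applying this with the triple $(i-1,i,i+1)$ recovers the stated corollary, since there is no index strictly between $i-1$ and $i+1$ other than $i$ itself and $A_iA_i$ is not a segment.)

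For the last assertion, set $\sigma:=\nabla A_iA_k=\nabla A_jA_p$. The key remark, again pure general position, is that no two distinct segments of $S$ incident to a common vertex are parallel: if $A_jA_q\parallel A_jA_{q'}$ then $A_j,A_q,A_{q'}$ are collinear, so $q=q'$. Applied at $A_i$ this shows that $A_iA_k$ is the unique segment incident to $A_i$ of slope $\sigma$, whence $\nabla A_iA_l\ne\sigma$ for every $l\ne k$, a fortiori for every $l\notin\{j,k\}$. Applied at $A_j$ it shows that $A_jA_p$ is the unique segment incident to $A_j$ of slope $\sigma$; together with the general-position facts $\nabla A_iA_j\ne\sigma$ and $\nabla A_kA_j\ne\sigma$ (valid because $A_i,A_j,A_k$ are not collinear), this yields that $\sigma$ occurs among the slopes $\nabla A_lA_j$ only along $A_jA_p$ itself, which is the content of the second non-membership. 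Combining the two gives the displayed conclusion.

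The whole argument is elementary; the one place that needs care is a clean treatment of the separating chord — which open half-plane contains which arc, and why the parallel line $\ell$ remains inside $H$ — together with the bookkeeping of indices modulo $n$. If there is a crux, it is the implication ``$\nabla A_iA_k$ not forbidden at $A_j$ $\Rightarrow$ the witnessing vertex lies strictly between $A_i$ and $A_k$'', which is exactly where convexity is used.
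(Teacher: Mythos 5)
Your proof is correct and follows essentially the same route as the paper: the dichotomy is immediate from the definition of a forbidden slope, convexity forces the witnessing index $p$ to lie strictly between $i$ and $k$ (you phrase this via half-plane separation of the chord $A_iA_k$, the paper via the fact that two parallel chords cannot cross), and the ``moreover'' part is the same collinearity/general-position argument. Your reading of the second exclusion set --- that $\sigma$ occurs among the slopes $\nabla A_lA_j$ only for the segment $A_jA_p$ itself --- is the intended one, since $l=p$ trivially belongs to that set as literally written, and the paper's own proof makes the same implicit restriction.
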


\begin{proof}
This is almost immediate from the definition of a forbidden slope. In the second case, if $p\in \{1,\ldots , n\}$ were not between $i$ and $k$, the segments $A_iA_k$ and $A_jA_p$ would intersect. Finally, if $\nabla A_jA_p$ were equal to some $\nabla A_iA_l$, then $A_iA_k\parallel A_jA_p \parallel A_iA_l$, so $A_i, A_k$ and $A_l$ would be aligned, a contradiction. The same is true for the segments $A_lA_j$.
\end{proof}

We will also need the following result, which can be found in~\cite[Chapter~1]{PrasolovSolovyev}.

\begin{proposition}
Let $\mathcal C$ be a non-degenerate conic and $O$ a point on $\mathcal C$. If $P,Q$ are two points on $\mathcal C$, define $P+Q$ to be the unique point $R$ on $\mathcal C$ such that $RO\parallel PQ$ (with the convention that $XX$ is the tangent to $\mathcal C$ at $X$, for $X\in \mathcal C$). This addition turns $\mathcal C$ into an abelian group, of which $O$ is the identity element.
\label{prop:group}
\end{proposition}

In particular, for $P,Q,R,S$ four points on $\mathcal C$, we have $P+Q=R+S$ if and only if $PQ\parallel RS$. Five points in general position determine a unique conic. \Cref{lem:pascal} will enable us to introduce conics in the proof, in order to use \cref{prop:group}.

\begin{lemma}
Suppose $P_1,\ldots, P_6$ are points in the plane such that $P_1P_6\parallel P_2P_5$, $P_2P_3\parallel P_1P_4$ and $P_4P_5\parallel P_3P_6$. Then $P_1, \ldots , P_6$ lie on a common conic. \label{lem:pascal}
\end{lemma}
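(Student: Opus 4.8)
This is essentially the converse of Pascal's theorem, and the plan is to reduce it to Pascal's theorem together with the five-points-determine-a-conic fact stated just above. First I would let $\mathcal{C}$ be the unique conic through the five points $P_1, P_2, P_3, P_4, P_5$ (these are in general position — if three were collinear the parallelism hypotheses would be vacuous or contradictory, and I would need a short remark handling degenerate sub-cases, e.g. by a limiting argument or by treating a pair of lines as a degenerate conic). The goal is then to show $P_6 \in \mathcal{C}$. Let $P_6'$ be the second intersection of $\mathcal{C}$ with the line through $P_3$ parallel to $P_4P_5$ (which exists by the third hypothesis, once we know $P_6$ should lie on that line); more robustly, define $P_6'$ on $\mathcal{C}$ directly by the group law: using Proposition \ref{prop:group} with some base point $O \in \mathcal{C}$, the two hypotheses $P_1P_6 \parallel P_2P_5$ and $P_2P_3 \parallel P_1P_4$ translate to $P_1 + P_6 = P_2 + P_5$ and $P_2 + P_3 = P_1 + P_4$ in the group $(\mathcal{C}, +)$.

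The cleanest route is purely the group law: from $P_2 + P_3 = P_1 + P_4$ we get $P_1 - P_2 = P_3 - P_4$. Adding the target relation $P_4 + P_5 = P_3 + P_6$, i.e. $P_5 - P_6 = P_3 - P_4$, we would need $P_5 - P_6 = P_1 - P_2$, equivalently $P_1 + P_6 = P_2 + P_5$, which is exactly the first hypothesis. So define $P_6' \in \mathcal{C}$ by $P_6' := P_2 + P_5 - P_1$ (a well-defined element of the group $\mathcal{C}$). Then by construction $P_1 + P_6' = P_2 + P_5$, so $P_1P_6' \parallel P_2P_5 \parallel P_1P_6$, hence $P_6'$ lies on the line $P_1P_6$. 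Running the chain of group identities above with $P_6'$ in place of $P_6$, and using the first and second hypotheses, gives $P_3 + P_6' = P_4 + P_5$, so $P_4P_5 \parallel P_3P_6'$; combined with the third hypothesis $P_4P_5 \parallel P_3P_6$, this shows $P_6'$ also lies on the line $P_3P_6$.

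Therefore $P_6'$ lies on both lines $P_1P_6$ and $P_3P_6$. If these two lines are distinct, their intersection is the single point $P_6$, so $P_6' = P_6 \in \mathcal{C}$ and we are done. The remaining degenerate possibility is that $P_1$, $P_3$, $P_6$ are collinear (the two lines coincide); in that case I would argue directly — e.g. $P_1, P_3 \in \mathcal{C}$ and the line $P_1P_3$ meets $\mathcal{C}$ in at most these two points, and a separate short computation with the parallelism hypotheses forces $P_6$ into $\{P_1, P_3\}$ or produces a contradiction with general position, so this case does not genuinely arise (or arises only trivially). I expect this degenerate-case bookkeeping to be the only real friction; the main content is the three-line group-law computation, which is mechanical once Proposition \ref{prop:group} is in hand. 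An alternative, more synthetic write-up would invoke Pascal's theorem on the hexagon $P_1P_2P_3P_4P_5P_6'$ inscribed in $\mathcal{C}$ to get that the three "opposite side" intersection points are collinear, and then translate the two given parallelisms into two of those intersection points being at infinity, forcing the third to be at infinity as well, i.e. $P_4P_5 \parallel P_3P_6'$; but the group-law version above is shorter and avoids projective language.
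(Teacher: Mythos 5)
Your argument is correct, but it takes a genuinely different route from the paper. The paper's proof is a one-liner: it applies (the converse direction of) Pascal's theorem to the hexagon $P_1P_4P_5P_2P_3P_6$, whose three pairs of opposite sides are exactly the three pairs of parallel segments in the hypothesis, so the three intersection points of opposite sides all lie on the line at infinity and are therefore collinear. You instead take the unique conic $\mathcal C$ through $P_1,\ldots,P_5$, use the group law of Proposition~\ref{prop:group} to manufacture a point $P_6':=P_2+P_5-P_1\in\mathcal C$ satisfying $P_1P_6'\parallel P_2P_5$ and $P_3P_6'\parallel P_4P_5$, and identify $P_6'=P_6$ as the intersection of the two lines $P_1P_6$ and $P_3P_6$; the group-law computation $P_3+P_6'=(P_2+P_3)+P_5-P_1=P_4+P_5$ is correct. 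In effect you are re-proving the Braikenridge--Maclaurin converse of Pascal in this special case. What the paper's approach buys is brevity and immunity to degeneracy: it needs no non-degenerate conic through five of the points, no group law, and no case analysis. What your approach buys is self-containedness relative to the tools already introduced (Proposition~\ref{prop:group} and the five-point uniqueness of conics), at the cost of the bookkeeping you yourself flag: you must assume $P_1,\ldots,P_5$ are in general position so that $\mathcal C$ is non-degenerate, and you must rule out $P_6'=P_1$ (where ``the line $P_1P_6'$'' degenerates to the tangent) and the coincidence of the lines $P_1P_6$ and $P_3P_6$ (i.e.\ $P_1,P_3,P_6$ collinear). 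In every application of this lemma in the paper the six points are vertices of a convex polygon with no three collinear, so these degenerate cases do not in fact arise, but a fully rigorous standalone write-up of your version would need to dispose of them explicitly, as you anticipate.
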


\begin{proof}
This follows immediately from Pascal's theorem, with the hexagon $\mathcal H =P_1P_4P_5P_2P_3P_6$. Indeed, the intersections of the opposite sides of $\mathcal H$ are collinear on the line at infinity.
\end{proof}

\begin{figure}[H]
\centering
\begin{tikzpicture}[scale=0.6]
\draw [rotate around={141.69624887687556:(3.4782608695652173,-0.8260869565217391)}] (3.4782608695652173,-0.8260869565217391) ellipse (5.389241694860877cm and 2.623805418319392cm);

\node (1) at (-1, 2) {};
\node (6) at (0, -1) {};
\node (2) at (0, 3) {};
\node (5) at (2, -3) {};
\node (4) at (6.25, 0.1875) {};
\node (3) at (4, 2) {};

\node [left] at (-1, 2) {$P_1$};
\node [left] at (0, -1) {$P_6$};
\node [above] at (0, 3) {$P_2$};
\node [left] at (2, -3) {$P_5$};
\node [right] at (6.25, 0.1875) {$P_4$};
\node [right] at (4, 2) {$P_3$};
	
\draw [fill=black] (-1,2) circle (2.5pt);
\draw [fill=black] (0,-1) circle (2.5pt);
\draw [fill=black] (0,3) circle (2.5pt);
\draw [fill=black] (2,-3) circle (2.5pt);
\draw [fill=black] (6.25,0.1875) circle (2.5pt);
\draw [fill=black] (4,2) circle (2.5pt);

\draw [thick, dashed] (1.center) -- (6.center);
\draw [thick, dashed] (2.center) -- (5.center);
\draw [thick, dotted] (2.center) -- (3.center);
\draw [thick, dotted] (1.center) -- (4.center);
\draw [thick, dash dot dot] (3.center) -- (6.center);
\draw [thick, dash dot dot] (4.center) -- (5.center);

\end{tikzpicture}
\caption{Illustration of \cref{lem:pascal}.}
\label{fig:conic}
\end{figure}

For the reader's convenience, we reproduce here a result of Korchm\'aros~\cite{Korchmaros} (which is also discussed in~\cite{FisherJamison}), that we will use twice in the proof.

\begin{lemma}
Let $P_1,\ldots , P_n$ be distinct points on a non-degenerate conic. Suppose that, for all $j\in \mathbb Z$, $P_{j+1}P_{j+2}\parallel P_{j}P_{j+3}$. Then, $P$ is affinely equivalent to a regular $n$-gon.
\label{lem:polygon}
\end{lemma}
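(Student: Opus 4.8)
The plan is to linearise the hypothesis via the group law on the conic (\cref{prop:group}). Equip the underlying non-degenerate conic $\mathcal C$ with the abelian group structure of \cref{prop:group}, taking $O=P_1$ as the identity and writing $+$ for the operation. Since, for four points on $\mathcal C$, the relation $X+Y=Z+W$ is equivalent to $XY\parallel ZW$, the hypothesis $P_{j+1}P_{j+2}\parallel P_jP_{j+3}$ becomes
\[
P_{j+1}+P_{j+2}=P_j+P_{j+3}\qquad(j\in\mathbb Z).
\]
Rearranged, this reads $P_{j+2}-P_j=P_{j+3}-P_{j+1}$, so $j\mapsto P_{j+2}-P_j$ is constant; call its value $g$. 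Thus $P_{j+2}=P_j+g$ for all $j$: the even-indexed and the odd-indexed vertices each form an ``arithmetic progression'' of common difference $g$ inside the group. Telescoping $\sum_j(P_{j+2}-P_j)$ over a full period of the relevant index class then yields $ng=0$ when $n$ is odd and $mg=0$ when $n=2m$ is even.

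Next I would read off the structure of $\{P_1,\dots,P_n\}$, separating the parity of $n$. If $n$ is odd, $j\mapsto j+2$ is a single $n$-cycle on $\mathbb Z/n\mathbb Z$, so all vertices lie in one coset of $\langle g\rangle$, namely $P_j=\tfrac{j-1}{2}\,g$ with $2^{-1}$ taken mod $n$; since the $P_j$ are pairwise distinct, $\langle g\rangle$ has exactly $n$ elements, i.e.\ $g$ has order $n$. In particular $(\mathcal C,+)$ possesses an element of order $n\geq 7$. I would then invoke the classification of the group laws of real non-degenerate conics: a parabola gives $(\mathbb R,+)$, a hyperbola gives $\mathbb R^{\times}$ (whose torsion is only $\{\pm1\}$), and an ellipse gives $\mathbb R/\mathbb Z$. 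Only the ellipse admits an element of order $\geq 3$, so $\mathcal C$ is an ellipse and $(\mathcal C,+)\cong\mathbb R/\mathbb Z$. When $n=2m$ is even one argues identically, except that the odd- and even-indexed vertices sit in two separate cosets of $\langle g\rangle$; one gets $\operatorname{ord}(g)=m\geq 4$ by distinctness, and the same conclusion follows: $\mathcal C$ is an ellipse.

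With $\mathcal C$ an ellipse, write $\mathcal C=T(\mathcal K)$ for the unit circle $\mathcal K$ and an invertible affine map $T$. Affine maps preserve parallelism, hence the construction of \cref{prop:group}, so $T\colon(\mathcal K,+)\to(\mathcal C,+)$ is a group isomorphism; and on $\mathcal K$, in the standard angular parametrization, the group is $\mathbb R/\mathbb Z$ while equally spaced parameter values are precisely the vertices of a regular polygon (whose $T$-image is, by definition, affinely regular). An element $g$ of order $n$ of $\mathbb R/\mathbb Z$ equals $k/n$ with $\gcd(k,n)=1$, so $\{0,g,2g,\dots,(n-1)g\}=\{0,\tfrac1n,\dots,\tfrac{n-1}{n}\}$ as subsets of $\mathbb R/\mathbb Z$; transporting through $T$ shows $\{P_1,\dots,P_n\}$ is the vertex set of an affinely regular $n$-gon. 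Finally one checks that the labelling $P_1P_2\cdots P_n$ runs through this polygon in its cyclic order — rather than skipping around it as a star polygon — which follows from the $P_j$ being in convex position, so that the parameter of $P_j$ becomes an affine function of $j$.

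I expect two points to demand the most care. The first is a citation burden: one must pin down the classification of the group structures of real conics precisely enough to exclude parabolas and hyperbolas. The second, and the genuine obstacle, is the parity split: the even case is structurally different because the vertices fall into two $\langle g\rangle$-orbits rather than one, so recovering a regular polygon — and not merely an affine image of two smaller affinely regular $m$-gons in an arbitrary relative position — requires controlling how the two orbits are interleaved, an input going beyond the bare parallelism relations (supplied, in the applications of the lemma, by the convex ordering of the points).
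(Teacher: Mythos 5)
The paper does not actually prove \cref{lem:polygon}: it is quoted from Korchm\'aros (via Fisher--Jamison), so there is no internal proof to compare yours against. Your strategy --- linearise the parallelism via the group law of \cref{prop:group}, get $P_{j+2}-P_j=g$ constant, and classify the group structures of real conics to force an ellipse --- is the natural one, and your treatment of odd $n$ is correct and complete: the $n$ points are exactly the cyclic subgroup $\langle g\rangle$ of order $n$, which transports to the $n$-th roots of unity on the circle.

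The difficulty you flag in the even case is, however, not a difficulty to be overcome but a genuine falsity of the statement, and the repair you propose (convex position of the labelled polygon) does not work. Concretely, for $n=8$ take the points of the unit circle with angular parameters
\[
(\theta_1,\ldots,\theta_8)=\bigl(0,\tfrac{1}{10},\tfrac14,\tfrac14+\tfrac1{10},\tfrac12,\tfrac12+\tfrac1{10},\tfrac34,\tfrac34+\tfrac1{10}\bigr),
\]
the point $P_j$ being at angle $2\pi\theta_j$. Two chords of the circle are parallel exactly when the sums of their endpoints' parameters agree modulo $1$, and one checks $\theta_{j+1}+\theta_{j+2}=\theta_j+\theta_{j+3}$ for every $j$ (both parities of $j$, cyclically). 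These eight points are distinct, lie on a non-degenerate conic, and are even in convex position in the given cyclic order --- yet they form two squares offset by $\tfrac{1}{10}$ of a turn instead of $\tfrac{1}{16}$, which is not affinely equivalent to a regular octagon: an affine map carrying them to a regular octagon would carry the circle to that octagon's circumcircle (two distinct conics meet in at most four points), hence would be a similarity, and the angular gaps are not equal. So for even $n\geq 6$ the lemma as stated is false and no argument can close your gap; the extra input genuinely needed is that the two $\langle g\rangle$-cosets interleave at half-spacing, which neither the parallelism relations nor convexity supply. The statement that is both true for all $n$ and sufficient for the paper is the one your odd-case argument already proves: \emph{a finite subgroup of order $n\geq 3$ of a non-degenerate conic is, up to affinity, a regular $n$-gon}. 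In Case 2.2 the points are explicitly exhibited as the full cyclic group $\langle x\rangle$ of order $n+1$, so this suffices there; in Case 1.1 with $n$ even one must additionally rule out the two-coset configuration, e.g.\ by counting its directions ($n$ or $3n/2$, never $n+1$).
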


\subsection{Main Theorem}

\begin{theorem}
Any set $S$ of $n\geq 7$ points in the plane, in general position, that determines exactly $n+1$ slopes, is affinely equivalent to $n$ of the vertices of a regular $(n+1)$-gon.
\end{theorem}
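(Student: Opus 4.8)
The plan is to extract from the forbidden-slope data a rigid web of parallelisms between the chords of $S$, use Pascal's theorem to force all of $S$ onto a single non-degenerate conic, and then finish on the conic using its group law and Korchm\'aros's lemma. Recall from Section~\ref{preliminaries} that $S=\{A_1,\dots,A_n\}$ may be taken in convex position and that at each vertex $A_i$ there are exactly two forbidden slopes, one of which is $\nabla A_{i-1}A_{i+1}$; write $\sigma_i$ for the other one. Identifying the $\sigma_i$ is the whole combinatorial content of the theorem.

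The first step is the heart of the argument: using \cref{lem:forbidden} repeatedly, show that there is a distinguished edge of the convex polygon --- call it $A_nA_1$ after relabelling --- such that $\nabla A_iA_j=\nabla A_kA_l$ whenever $i<j$ and $k<l$ all lie in $\{1,\dots,n\}$ and $i+j=k+l$; this edge will turn out to be where the missing $(n+1)$-st vertex sits, and the equality fails across it. The mechanism is that \cref{lem:forbidden} sends each ``absent'' chord $A_iA_k$ either to a forbidden slope at an intermediate $A_j$ or to a forced parallelism $A_iA_k\parallel A_jA_p$; since $A_j$ can absorb only the single extra forbidden slope $\sigma_j$, parallelisms are generated in bulk, and one propagates them inward from the two vertices flanking the distinguished edge. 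I expect this to be the main obstacle: pinning down all the $\sigma_i$, handling the base cases and the behaviour at the ends of the range, and arranging the bootstrap so that it is not circular --- this is where a long case analysis appears unavoidable.

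Granting the relations of Step~1, the geometry is quick. Five points in general position determine a unique conic, and it is non-degenerate whenever no three of them are collinear (a degenerate conic is a union of at most two lines, which cannot carry five points in general position). Let $\mathcal C$ be the conic through $A_1,\dots,A_5$. For $k=6,\dots,n$, apply \cref{lem:pascal} to the six points $A_{k-5},A_{k-4},A_{k-3},A_{k-2},A_{k-1},A_k$: the three parallelisms it requires, namely $\nabla A_{k-5}A_k=\nabla A_{k-4}A_{k-1}$, $\nabla A_{k-4}A_{k-3}=\nabla A_{k-5}A_{k-2}$ and $\nabla A_{k-2}A_{k-1}=\nabla A_{k-3}A_k$, all come from Step~1 (each pair has equal index sum, and none crosses the distinguished edge). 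Hence these six points lie on a conic, necessarily $\mathcal C$, since its first five points already determine $\mathcal C$; so $A_k\in\mathcal C$. Inductively, $S\subset\mathcal C$.

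Finally, give $\mathcal C$ the group law of \cref{prop:group} with an arbitrary origin $O$, written ``$+$'' as in that proposition. By \cref{prop:group}, $\nabla A_iA_j=\nabla A_kA_l$ becomes $A_i+A_j=A_k+A_l$, so Step~1 forces $A_{i+1}-A_i$ to equal one fixed element $d\in\mathcal C$ for all $1\le i\le n-1$; thus $A_i=A_1+(i-1)d$ for $1\le i\le n$, multiples being taken in the group. Let $N$ be the order of $d$. Distinctness of the $A_i$ gives $N\ge n$, and since $n$ consecutive vertices of an affinely regular $N$-gon determine exactly $\min(N,2n-3)$ slopes, the hypothesis that $S$ determines $n+1$ slopes --- together with $n\ge7$, which makes $2n-3>n+1$ --- forces $N=n+1$. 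Now set $A_0:=A_1-d\in\mathcal C$, so that $A_i=A_0+id$ for $0\le i\le n$ and $(n+1)d=O$. Then $A_0,A_1,\dots,A_n$ are $n+1$ distinct points of $\mathcal C$, and for every $j$ the chords $A_{j+1}A_{j+2}$ and $A_jA_{j+3}$ share the group-sum $2A_0+(2j+3)d$, hence are parallel by \cref{prop:group}. By \cref{lem:polygon}, the points $A_0,A_1,\dots,A_n$ form an affinely regular $(n+1)$-gon, and deleting $A_0$ proves the theorem.
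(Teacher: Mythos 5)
Your Steps 2 and 3 are essentially sound and coincide with the paper's endgame: once the index-sum parallelisms are available, Pascal (\cref{lem:pascal}) forces $S$ onto one conic, the group law of \cref{prop:group} turns the chords into an arithmetic progression, the slope count $\min(N,2n-3)=n+1$ pins down the order $N=n+1$ (this count follows directly from the group law and does not need the points to be an affinely regular $N$-gon, so your phrasing there is repairable), and \cref{lem:polygon} finishes. The problem is Step 1, which you yourself identify as ``the whole combinatorial content of the theorem'' and then do not prove: you describe a propagation mechanism and state that ``a long case analysis appears unavoidable,'' but no base case is established, no $\sigma_i$ is actually identified, and the existence of the distinguished edge is asserted rather than derived. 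As it stands this is a strategy outline with the hardest part deferred, not a proof.

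Moreover, the specific plan for Step 1 --- extract all parallelisms by ``using \cref{lem:forbidden} repeatedly'' and only then bring in the conics --- does not survive contact with the actual difficulty. Before any distinguished edge can exist you must exclude the fully symmetric alternative in which $A_{i+1}A_{i+2}\parallel A_iA_{i+3}$ for \emph{every} $i$ (cyclically); in that situation the forbidden-slope bookkeeping of \cref{lem:forbidden} is perfectly consistent with an affinely regular $n$-gon, and the contradiction with the hypothesis of $n+1$ slopes is only reached by passing to a conic: the paper needs \cref{lem:pascal} and the group law inside each of its Subcases 1.1, 1.2 and 1.4, and again in its Subcase 2.1, to kill these configurations. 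So the geometry cannot be cleanly postponed to after the combinatorics; it is needed to establish the very existence of the edge your bootstrap starts from. Even in the surviving case, the paper's derivation of the relations $A_2A_{i-2}\parallel A_1A_i$ and the inductive claim $A_{i-1}A_k\parallel A_iA_{k-1}$ is a genuinely delicate monotonicity-plus-induction argument, and note that the chords through the vertex adjacent to the missing $(n+1)$-st point do \emph{not} obey the naive index-sum rule in the paper's labelling (there $A_1=-2x$ while $A_i=(i-2)x$ for $i\geq 2$), so the correct normalisation of your Step 1 statement is itself part of what must be proved.
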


\begin{proof}

We use the notations of \cref{preliminaries}: $S=\{A_1,A_2,\ldots, A_n\}$ where $A_1A_2\ldots A_n$ is a convex polygon. We will split the proof into two cases. In the first case, we suppose that, for every $i\in \mathbb Z$, $A_{i+1}A_{i+2}\parallel A_iA_{i+3}$. If this fails for some $i$, we can assume that this $i$ is $1$.

\begin{case}For every $i\in \mathbb Z$, $A_{i+1}A_{i+2}\parallel A_iA_{i+3}$. \smallbreak

We will distinguish subcases according to which segments are parallel to $A_iA_{i+5}$. As we will see, none of the subcases is actually possible. 

\begin{subcase}For all $i \in \mathbb Z$, $A_iA_{i+5}\parallel A_{i+1}A_{i+4}$. \smallbreak
Let $A_{k+1}, \ldots , A_{k+6}$ be any six consecutive points of $S$. We have $A_{k+1}A_{k+6}\parallel A_{k+2}A_{k+5}$, $A_{k+2}A_{k+3}\parallel A_{k+1}A_{k+4}$ and $A_{k+4}A_{k+5}\parallel A_{k+3}A_{k+6}$ from our two assumptions. Thus, \cref{lem:pascal} implies that the six points lie on a common conic. As this is true for any six consecutive points, and since five points in general position (i.e. no three collinear) determine a unique conic, all the $A_i$'s lie on the same conic. Together with the fact that $\forall i, \,A_{i+1}A_{i+2}\parallel A_iA_{i+3}$, this implies that $A_1A_2\ldots A_n$ is affinely equivalent to a regular $n$-gon, by \cref{lem:polygon}. Therefore, $S$ determines exactly $n$ directions, which is a contradiction.
\end{subcase}

\begin{subcase}For some $i\in \mathbb Z$, we have $A_{i}A_{i+5}\parallel A_{i+2}A_{i+4}$. \label{case:1.2}\smallbreak
Say $i=1$, meaning $A_{1}A_{6}\parallel A_3A_{5}$. By \cref{lem:forbidden} applied three times, we see that $\nabla A_2A_{6}$ is forbidden at $A_{3}, A_{4}$ and $A_{5}$ (here, we have used that $A_{2}A_{6}\nparallel A_3A_{5}$ and, for $A_{4}$, that $A_{3}A_{4}\parallel A_{2}A_{5}$ and $A_{4}A_{5}\parallel A_{3}A_{6}$). For $l=3,4,5$, we know that $\nabla A_{2}A_6$ and $\nabla A_{l-1}A_{l+1}$ are exactly the two forbidden slopes at $A_l$. Therefore, $\nabla A_1A_5$ is not forbidden at $A_4$, hence, by \cref{lem:forbidden} again, we conclude that $A_1A_5\parallel A_2A_4$. Similarly, $\nabla A_3A_7$ is not forbidden at $A_4$, so $A_3A_7\parallel A_4A_6$. As the slope of $A_2A_7$ is not forbidden at $A_5$, we conclude $A_2A_7\parallel A_4A_5(\parallel A_3A_6)$. We have $A_3A_4\parallel A_2A_5$, $A_3A_6\parallel A_2A_7$ and we just showed that $A_2A_7\parallel A_3A_6$. By \cref{lem:pascal}, $A_2, A_3, \ldots, A_7$ lie on a common conic.

\begin{figure}[ht]
\centering
\begin{tikzpicture}[scale=0.5, rotate=-40]
\node (1) at (9.12,-6.67) {};
\node (2) at (8.29,-3.24) {};
\node (3) at (5.22,-0.63) {};
\node (4) at (1.67,-0.34) {};
\node (5) at (-0.16,-2.54) {};
\node (6) at (-3.48,-11.13) {};
\node (7) at (-6.26,-20.84) {};

\draw [fill=black] (1) circle (1pt);
\draw [fill=black] (2) circle (1pt);
\draw [fill=black] (3) circle (1pt);
\draw [fill=black] (4) circle (1pt);
\draw [fill=black] (5) circle (1pt);
\draw [fill=black] (6) circle (1pt);
\draw [fill=black] (7) circle (1pt);

\node [right] at (1) {$A_1$};
\node [right] at (2) {$A_2$};
\node [above] at (3) {$A_3$};
\node [above] at (4) {$A_4$};
\node [above] at (5) {$A_5$};
\node [left] at (6) {$A_{6}$};
\node [right] at (7) {$A_{7}$};

\draw [] (3.center) -- (4.center);
\draw [] (2.center) -- (5.center);
\draw [double] (1.center) -- (6.center);
\draw [double] (3.center) -- (5.center);
\draw [dashed] (1.center) -- (4.center);
\draw [dashed] (2.center) -- (3.center);
\draw [dotted] (5.center) -- (6.center);
\draw [dotted] (7.center) -- (4.center);
\draw [dash dot dot] (5.center) -- (4.center);
\draw [dash dot dot] (3.center) -- (6.center);

\end{tikzpicture}
\caption{Case \ref{case:1.2}.}
\end{figure}

We will equip this conic with the group structure descibed in \cref{prop:group}, with $A_7$ the zero element. We will write $A_7=0$ and $A_6=x$. Then, $A_5A_6\parallel A_4A_7$, $A_4A_5\parallel A_3A_6$ and $A_4A_6\parallel A_3A_7$ together imply $A_5=2x$, $A_4=3x$ and $A_3=4x$. Also, $A_3A_4\parallel A_2A_5$ gives $A_2=5x$. Let $B$ be the point on the conic with $B=6x$. We thus have $A_2A_3\parallel BA_4$ and $A_2A_4\parallel BA_5$. However, there can only be one point $P$ with $A_2A_3\parallel PA_4$ and $A_2A_4\parallel PA_5$. As $A_1$ is such a point, $A_1=B=6x$. This contradicts $A_1A_6\parallel A_3A_5$, as $A_1+A_6=6x+x\neq 4x+2x=A_3+A_5$.
\end{subcase}

\begin{subcase}For some $i\in \mathbb Z$, we have $A_{i}A_{i+5}\parallel A_{i+1}A_{i+3}$.\smallbreak
This is exactly the previous case after having relabelled every $A_i$ as $A_{n+1-i}$.
\end{subcase} 

\begin{subcase} The previous cases do not apply. \label{case:1.4}\smallbreak
If none of the previous cases is possible, there must be some $i$, say $i=1$, for which $A_1A_{6}$ is not parallel to any of $A_{2}A_{5}$, $A_{3}A_{5}$ and $A_{2}A_{4}$. Then, $\nabla A_1A_6$ is forbidden at $A_2, A_3, A_4$ and $A_5$. Once again, we deduce that the forbidden slopes at $A_l$, $2\leq l\leq 5$, are $\nabla A_1A_6$ and $\nabla A_{l-1}A_{l+1}$. We use \cref{lem:forbidden} to find $A_2A_6\parallel A_3A_5$ (applied with $A_k=A_4$) and  $A_1A_5\parallel A_2A_4$ ($A_k=A_2$).

Let $\mathcal C$ be the conic passing through $A_1, A_2, \ldots , A_5$. We use \cref{prop:group} to define a group structure on $\mathcal C$, with $A_1=0$. Let $A_2=x$ and $A_3=y$. From $A_2A_3\parallel A_1A_4$ and $A_3A_4\parallel A_2A_5$, we have $A_4=x+y$ and $A_5=2y$. But $A_2A_4\parallel A_1A_5$ implies $y=2x$, so $A_i=(i-1)x$ for $1\leq i \leq 5$. We use the same argument as before. Let $B=5x$, then $A_4A_5\parallel A_3B$ and $A_3A_5\parallel A_2B$, so $B=A_6=5x$. We deduce $A_1A_6\parallel A_2A_5$, a contradiction.
\end{subcase}

\begin{figure}[ht]
\centering
\begin{tikzpicture}[scale=0.9, rotate=15]
\node (5) at (-0.41, -0.73) {};
\node (3) at (1,1) {};
\node (2) at (3,1) {};
\node (1) at (5.34,-0.73) {};
\node (0) at (5.95,-3.25) {};
\node (-1) at (4.05, -3.4) {};

\draw [fill=black] (5) circle (1pt);
\draw [fill=black] (3) circle (1pt);
\draw [fill=black] (2) circle (1pt);
\draw [fill=black] (1) circle (1pt);
\draw [fill=black] (0) circle (1pt);
\draw [fill=black] (-1) circle (1pt);

\node [left] at (5) {$A_6$};
\node [above] at (3) {$A_5$};
\node [above] at (2) {$A_4$};
\node [right] at (1) {$A_3$};
\node [right] at (0) {$A_2$};
\node [right] at (-1) {$A_{1}$};

\draw [dash dot dot] (1.center) -- (0.center);
\draw [dash dot dot] (2.center) -- (-1.center);
\draw [] (1.center) -- (2.center);
\draw [] (3.center) -- (0.center);
\draw [double] (-1.center) -- (5.center);
\draw [dashed] (1.center) -- (5.center);
\draw [dashed] (2.center) -- (3.center);

\draw [densely dotted] (1.center) -- (3.center);
\draw [loosely dotted] (0.center) -- (2.center);
\end{tikzpicture}
\caption{Case \ref{case:1.4}.}
\end{figure}

\end{case}

\begin{case}We have $A_2A_3\nparallel A_1A_4$ (without loss of generality).\smallbreak

Without loss of generality, we can also suppose that the point $A_4$ is closer to the line $A_2A_3$ than is $A_1$. In this situation, the line parallel to $A_2A_3$ passing through $A_4$ intersects the segment $[A_1A_2]$ in its relative interior, and the line parallel to $A_2A_3$ passing through $A_1$ does not intersect the segment $[A_3A_4]$.

From $A_2A_3\nparallel A_1A_4$, we deduce that the forbidden slopes at $A_2$ and $A_3$ are $\nabla A_1A_3,\nabla A_1A_4$ and $\nabla A_2A_4$, $\nabla A_1A_4$, respectively. Thus, $A_1A_2\parallel A_nA_3$ and $A_2A_5\parallel A_3A_4$. We now show that $A_2A_3$ is forbidden at $A_4$. Suppose, for some $k$, that $A_2A_3\parallel A_4A_k$. Then, $k$ has to be between $5$ and $n$, so $A_1A_2A_3A_4A_k$ must be a convex polygon, with $A_2A_3\parallel A_4A_k$. We can see that this contradicts the fact that $A_4$ is closer than $A_1$ to the line $A_2A_3$.

\begin{subcase}$A_{n-1}A_2\parallel A_nA_1$.\label{case:2.1} \smallbreak
We want to show that this case is impossible. From \cref{lem:forbidden}, we find $A_{n-1}A_3\parallel A_{n}A_{2}$. When we apply this lemma again with the slope of $A_nA_4$, we find that $A_nA_4$ is parallel to $A_1A_3$, because $A_2A_3$ is forbidden at $A_4$. In the same way, we get $A_{n-1}A_4\parallel A_nA_3$.

\begin{figure}[ht]
\centering
\begin{tikzpicture}[scale=1, rotate=30]
\node (5) at (0, -1) {};
\node (4) at (0, 0) {};
\node (3) at (1.5,1) {};
\node (2) at (3,1) {};
\node (1) at (5,-1) {};
\node (0) at (6,-3.5) {};
\node (-1) at ($(3,1)+2.4*(1,-2.5)$) {};

\draw [fill=black] (6,-3.5) circle (1pt);
\draw [fill=black] (5,-1) circle (1pt);
\draw [fill=black] (3,1) circle (1pt);
\draw [fill=black] (1.5,1) circle (1pt);
\draw [fill=black] (0,0) circle (1pt);
\draw [fill=black] (0, -1) circle (1pt);
\draw [fill=black] (-1) circle (1pt);

\node [left] at (5) {$A_5$};
\node [left] at (4) {$A_4$};
\node [above] at (3) {$A_3$};
\node [above] at (2) {$A_2$};
\node [right] at (1) {$A_1$};
\node [right] at (0) {$A_n$};
\node [right] at (-1) {$A_{n-1}$};

\draw [dash dot dot] (1.center) -- (0.center);
\draw [dash dot dot] (2.center) -- (-1.center);
\draw [dashed] (1.center) -- (2.center);
\draw [dashed] (3.center) -- (0.center);
\draw [dotted] (2.center) -- (5.center);
\draw [dotted] (3.center) -- (4.center);
\draw [] (2.center) -- (3.center);
\draw [double] (1.center) -- (4.center);
\end{tikzpicture}
\caption{Case \ref{case:2.1}.}
\end{figure}

Let $\mathcal C$ be the conic passing through $A_3, A_2, A_1, A_n$ and $A_{n-1}$. Again, we use \cref{prop:group}, setting $A_{n-1}=0$. Let $A_n=x$ and $A_2=y$. From $A_{n-1}A_3\parallel A_nA_2$ we deduce $A_3=x+y$, and from $A_{n-1}A_4\parallel A_nA_3$ we get $A_4=y+2x$. Let $B=2x$. Then $A_nA_4\parallel BA_3$ and $A_nA_3\parallel BA_2$. This means that $B$ belongs to the line parallel to $A_nA_4$ through $A_3$ and to the line parallel to $A_nA_3$ through $A_2$. So $B=A_1$, i.e. $A_1=2x$. On the one hand, the relation $A_{n-1}A_2\parallel A_nA_1$ gives $0+y=x+2x$. On the other hand, $A_{2}A_3\nparallel A_1A_4$ yields $y+(y+x)\neq 2x+(y+2x)$. This is absurd.
\end{subcase}

\begin{subcase}$A_{n-1}A_2\nparallel A_nA_1$\label{case:2.2}. \smallbreak
This is the last case of the proof, and the only case that produces valid configurations of points. As $A_{n-1}A_2\nparallel A_nA_1$, $\nabla A_{n-1}A_2$ is forbidden at $A_1$. With $\nabla A_0A_2$, those are the two forbidden slopes at $A_1$. Therefore, none of $\nabla A_2A_i$, $3\leq i\leq n-2$ is forbidden at $A_1$. So, every $\nabla A_2A_i$, $3\leq i\leq n-2$, corresponds to a unique $\nabla A_1A_j$ for some $j$.

\begin{figure}[H]
\centering
\begin{tikzpicture}[scale = 3.8, rotate=-30, y=0.7cm]
\node (0) at (1.0,0.0) {};
\fill (1.0,0.0) circle (0.3pt);
\node (1) at (0.9135454576426009,0.40673664307580015) {};
\fill (0.9135454576426009,0.40673664307580015) circle (0.3pt);
\node (2) at (0.6691306063588583,0.7431448254773941) {};
\fill (0.6691306063588583,0.7431448254773941) circle (0.3pt);
\node (3) at (0.30901699437494745,0.9510565162951535) {};
\fill (0.30901699437494745,0.9510565162951535) circle (0.3pt);
\node (5) at (-0.4999999999999998,0.8660254037844388) {};
\fill (-0.4999999999999998,0.8660254037844388) circle (0.3pt);
\node (6) at (-0.8090169943749473,0.5877852522924732) {};
\fill (-0.8090169943749473,0.5877852522924732) circle (0.3pt);
\node (7) at (-0.9781476007338057,0.2079116908177593) {};
\fill (-0.9781476007338057,0.2079116908177593) circle (0.3pt);
\node (8) at (-0.9781476007338057,-0.20791169081775907) {};
\fill (-0.9781476007338057,-0.20791169081775907) circle (0.3pt);
\node (9) at (-0.8090169943749475,-0.587785252292473) {};
\fill (-0.8090169943749475,-0.587785252292473) circle (0.3pt);
\node (10) at (-0.5000000000000004,-0.8660254037844384) {};
\fill (-0.5000000000000004,-0.8660254037844384) circle (0.3pt);
\node (11) at (-0.10452846326765423,-0.9945218953682733) {};
\fill (-0.10452846326765423,-0.9945218953682733) circle (0.3pt);
\node (12) at (0.30901699437494723,-0.9510565162951536) {};
\fill (0.30901699437494723,-0.9510565162951536) circle (0.3pt);
\node (13) at (0.6691306063588585,-0.743144825477394) {};
\fill (0.6691306063588585,-0.743144825477394) circle (0.3pt);
\node (14) at (0.913545457642601,-0.40673664307580015) {};
\fill (0.913545457642601,-0.40673664307580015) circle (0.3pt);

\node [right] at (3) {$A_1$};
\node [above] at (5) {$A_2$};
\node [above] at (6) {$A_3$};
\node [left] at (7) {$A_4$};
\node [left] at (8) {$A_5$};
\node [right] at (2) {$A_{n}$};
\node [right] at (1) {$A_{n-1}$};
\node [right] at (0) {$A_{n-2}$};

\node [left] at (10) {$A_{i-2}$};
\node [left] at (12) {$A_i$};

\draw (6.center) -- (5.center);
\draw [double] (7.center) -- (3.center);
\draw [dashed] (3.center) -- (5.center);
\draw [dashed] (6.center) -- (2.center);
\draw [dotted] (6.center) -- (7.center);
\draw [dotted] (5.center) -- (8.center);
\draw [dash dot dot] (3.center) -- (2.center);
\draw [double, dash dot dot] (5.center) -- (1.center);

\draw (5.center) -- (10.center);
\draw (3.center) -- (12.center);
\end{tikzpicture}
\caption{Case \ref{case:2.2}.}
\end{figure}

A simple but important observation is that, for all $3\leq i_1,i_2\leq n-2$ and $4\leq j_1,j_2\leq n$, 
$$\begin{cases}A_2A_{i_1} \parallel A_1A_{j_1}\\
A_2A_{i_2} \parallel A_1A_{j_2}
\end{cases}\Longrightarrow \quad \left( i_1<i_2\iff j_1<j_2\right).$$
That is, the assignment $f$ that maps every $3\leq i\leq n-2$ to the unique $4\leq j\leq n$ such that $A_2A_i\parallel A_1A_j$ must be strictly increasing. Moreover, it has to satisfy $f(3)\neq 4$ as we assumed $A_2A_3\nparallel A_1A_4$. The unique possibility is then $f(i)=i+2$ for every $i$. We have proven that, for $5\leq i\leq n$, $A_2A_{i-2} \parallel A_1A_{i}$.

\begin{claim}For every $i\in \{5,\ldots, n\}$,
\begin{enumerate}
  \item $A_{i-2}A_i$ and $A_2A_{i-2}$ are the two forbidden slopes at $A_{i-1}$, and;
  \item $\forall k\in \{3,\ldots , i-2\}, \, A_{i-1}A_k\parallel A_iA_{k-1}$.
\end{enumerate}
\end{claim}

\begin{proof}[Proof of claim]
For $i=5$, we have already proven those two statements. We will prove them for $i=j$, assuming it has already been proven for all $5\leq i\leq j-1$.
\begin{enumerate}
  \item We have to show that $A_2A_{j-2}$ is forbidden at $A_{j-1}$. This is clear as $A_2A_{j-2}\parallel A_1A_j$ and there is no point of $S$ between $A_1$ and $A_2$.
  \item Since we know the forbidden slopes at $A_{j-1}$, we can use \cref{lem:forbidden} at the point $A_{j-1}$ several times, with different slopes. First, $\nabla A_{j}A_{j-3}$ is not forbidden, so $A_jA_{j-3}\parallel A_{j-1}A_{j-2}$. Then $\nabla A_{j}A_{j-4}$ is not forbidden, and is distinct from $\nabla A_{j-1}A_{j-2}=\nabla A_jA_{j-3}$, so $A_{j}A_{j-4}\parallel A_{j-1}A_{j-3}$. We can continue this way, until we get $A_{j}A_{2}\parallel A_{j-1}A_{3}$. This proves the claim. \qedhere
\end{enumerate}
\end{proof}

In particular, for every $i\in\{6,\ldots ,n-1\}$, we have $A_3A_i\parallel A_{2}A_{i+1}$, $A_5A_{i}\parallel A_4A_{i+1}$. As $A_3A_4\parallel A_2A_5$, we can use \cref{lem:forbidden}, which shows that $A_2,A_3,A_4,A_5,A_i$ and $A_{i+1}$ lie on a conic. As this is true for every $6\leq i \leq n-1$, we know that the $A_i$'s, for $2\leq i\leq n$, all lie on a common conic (because there is a unique conic passing through five points in general position). 

As we have done several times in this proof, we use the group structure on the conic given by parallelism. Choose $A_2$ to be the identity element, let $A_3=x$. Solving 
$$\begin{cases}
  A_3A_4\parallel A_2A_5\\
  A_3A_6\parallel A_4A_5\\
  A_3A_5\parallel A_2A_6
\end{cases}$$
gives $A_4=2x$, $A_5=3x$ and $A_6=4x$. Then, a simple induction (using $A_{i-1}A_3\parallel A_{i}A_2$) gives $A_i=(i-2)x$ for all $i\in\{2,\ldots ,n\}$. Let $B$ be the point on the conic with $B=-2x$. Then $A_2A_3\parallel BA_5$ and $A_2A_4\parallel BA_6$. However, we proved before that $A_2A_3\parallel A_1A_5$ and $A_2A_4\parallel A_1A_6$, so $A_1=B=-2x$.

To summarize, we know that all the $n$ points of $S$ are on a conic, $A_i=(i-2)x$ for $i\in \{2,\ldots , n\}$ and $A_1=-2x$. We use the group structure one last time: $A_3A_n\parallel A_1A_2$ implies $x+(n-2)x=-2x+0$, so $(n+1)x=0$. Therefore, the subgroup generated by $A_3=x$ is a finite cyclic group of order $n+1$: 
$$\langle A_3\rangle=\Big\{\underbrace{A_{2}}_{0}, \underbrace{A_{3}}_{x}, \underbrace{A_{4}}_{2x},\ldots,\underbrace{A_{n-1}}_{(n-3)x}, \underbrace{A_{n}}_{(n-2)x}, \underbrace{A_{1}}_{(n-1)x}, -x\Big\}.$$
To finish the proof, we use the more convenient notations $P_j:=jx$ for $0\leq j\leq n$ (so that every $A_i$ is a $P_j$). If the indices are considered modulo $n+1$, we have, for all $j\in \mathbb Z$, $P_{j+1}P_{j+2}\parallel P_{j}P_{j+3}$, because $(j+1)x+(j+2)x=jx+(j+3)x$. By \cref{lem:polygon}, $P_0P_1P_2\ldots P_n$ is, up to an affine transformation, a regular $(n+1)$-gon. \qedhere
\end{subcase}

\end{case}

\end{proof}

\section{Acknowledgements}

I would like to thank Christian Michaux, without whom this work would not have been possible.


\end{document}